\newcommand{\R}{\mathbb{R}}
\newcommand{\p}{\mathbb{P}}
\newcommand{\e}{\mathbb{E}}
\newtheorem{theo}{Theorem}
\newtheorem{lemma}{Lemma}
\newtheorem{example}{Example}
\newtheorem{nb}{Remark}
\newtheorem{algo}{Algorithm}
\begin{document}

	\title{On Firing Rate Estimation for Dependent Interspike Intervals}
	
	\author{Elisa Benedetto, Federico Polito, Laura Sacerdote\footnote{Corresponding author. Email: laura.sacerdote@unito.it}\\
		Department of Mathematics \emph{G. Peano}, University of Torino\\
		Via Carlo Alberto 10, 10123, Torino, Italy}
	
	\date{}
	
	\maketitle

	\begin{abstract}

		If interspike intervals are dependent the instantaneous firing rate does not catch important features of spike trains.
		In this case the conditional instantaneous rate plays the role of the instantaneous firing rate for the case of samples of independent
		interspike intervals. If the conditional distribution of the interspikes intervals is unknown, it becomes difficult to evaluate the
		conditional firing rate. 
		We propose a non parametric estimator for the conditional
		instantaneous firing rate for Markov, stationary and ergodic ISIs. An
		algorithm to check the reliability of the proposed estimator is introduced
		and its consistency properties are proved. The method is
		applied to data obtained from a stochastic two compartment model and to in vitro
		experimental data.

		\medskip		
		
		\noindent \textit{Keywords:} Firing rate; Non parametric estimator; Markov ISIs.
	
	\end{abstract}

	\section{Introduction}

		The firing rate characterizes the behavior of spiking neurons and it is often
		used to evaluate the information encoded in neural responses. Its use dates
		back to \citet{Adrian} who first assumed that information sent by a neuron is
		encoded in the number of spikes per observation time window. The frequency
		transfer function is then often studied to illustrate the relationship
		between input signal and firing rate, taken as constant for a fixed signal.
		Typically, the firing rate increases, generally non-linearly, with an increasing
		stimulus intensity \citep{Kandel}. The main objection to the use of
		the transfer function is that it ignores any information possibly encoded in
		the temporal structure of the spike train. Furthermore, different definitions of \emph{rate}
		give rise to different shapes of the frequency transfer function.
		They are equivalent under specific
		conditions and differ in other instances (see \citet{LRS}). 

		When the firing rate is not constant,
		specific techniques are necessary to estimate the instantaneous
		firing rate avoiding artifacts (see \citet{cun} for a review). An
		important tool in this framework is the kernel method. Optimizing the choice of
		kernels and of bins becomes necessary in the
		case of a time dependent input (see \citet{shim}, \citet{omi}).
		A method to estimate the firing rate in presence of
		non-stationary inputs was recently proposed in \citet{Iolov} for
		the case of slowly varying signals while \citet{Tamborrino} consider also the
		latency to improve the firing rate estimation. 

		A time dependent instantaneous firing rate is often related to a time
		dependent input 
		but in fact appears anytime the ISIs are not exponentially distributed. The case of independent identically distributed ISIs can be studied
		in the framework of renewal theory for point processes (see e.g.\ \citet{co}). However, neuronal spike trains rarely verify the renewal
		hypothesis. Even those spike trains that can be described as stationary often exhibit serial dependence among interspike intervals as already
		noted in \citet{{Perkel1}}, \citet{John} and more recently showed in \citet{fark}. The presence of an external signal is a further cause of
		dependence between ISIs. In this case, beside the renewal property, the process may lose also the stationarity. For example, adaptivity in
		presence of an external input determines non stationary dependent ISIs. \citet{fark2} enlighten important features of neural code determined
		by this phenomenon: decrease of the noise in the population rate code, faster and more reliable transmission of modulated input rate signals
		and the fact that a postsynaptic neuron can better decode a small change in its input if the presynaptic neurons are adaptive.
		\citet{muller} propose a Markov model to describe adaptivity. Their study is performed on a group of neurons by considering the ensemble
		firing rate in the framework of inhomogeneous renewal processes theory through inhomogeneous hazard rate functions.
 
		Dependent ISIs are observed also in absence of external input.
		\citet{fark} report experimental findings
		of negative serial correlation in ISIs observed in the sensory periphery as
		well as in central neurons in spontaneous activity.
		These observations cannot be related to
		input features and arise in stationary conditions. Poisson and gamma renewal processes are
		used extensively to model ISIs of cortical neurons (see for example \citet{vanrossum,song,rudolph,shelley}).
		For these neurons the instantaneous firing rate is generally estimated assuming the independence of ISIs.
		However, when the ISIs are dependent, the instantaneous firing rate is no longer the correct tool to
		analyze the data. Indeed it does not change for independent or dependent ISIs when the common marginal distribution is the same
		(think for example of exponentially or gamma distributed but dependent ISIs).		
		The use of the hypothesis of stationarity allows us to introduce the necessary statistical tools for the study of identically
		distributed dependent ISIs.
		
		To enlighten the role of the ISIs dependencies on the firing rate, a conditional
		instantaneous firing rate is considered. Indeed by using conditional
		distributions it is possible to remove the classical renewal hypothesis
		\citep{co}.
		The instantaneous conditional firing is updated after each ISI, thus accounting
		for the dependency between ISIs. \citet{John} computed the conditional firing time of ISIs following an ISI of specified duration. 

		Estimators for the conditional spiking intensity exist in the literature in the
		framework of maximum likelihood methods
		(see \citet{Brown,Gerhard,Liu}.
		Advantages of such estimators include the possibility to perform tests of
		hypotheses, to evaluate efficiency features or to compute standard errors.
		However, these advantages are based on the assumption that a specific model
		fits the data and no result holds when the model is not true or is unknown.
		Unfortunately, the model is known in a relative small number of cases and
		often its choice introduces non controllable approximations.
		Regression modeling of conditional intensity was considered by \citet{Brillinger}.
		Recent works in this framework concern seismologic analysis and perform non parametric regression using kernel methods
		to consider space-time dependencies \citep{Zhuang,adelfio}.
		
		Aim of this paper is to determine a conditional firing rate estimator for
		dependent ISIs avoiding to introduce a model to describe their distribution.
		We suggest a non parametric estimator for the instantaneous firing rate,
		conditioned on the past history of the spike train.
		Under suitable hypotheses we prove the consistency of the
		proposed estimator.

		The paper is organized in the following way. In Section \ref{S2} we recall
		different definitions of the firing rate taking care to enlighten
		their features if the ISIs are dependent. In Section \ref{S3} we propose an
		estimator of the conditional instantaneous firing rate of a neuron
		characterized by an ISI sequence that can be modeled as a Markov, ergodic
		and stationary process. The consistency of this estimator is proved in
		the Appendix. The direct check of Markov, ergodic and stationary hypotheses on
		the available data is not easy but it is necessary to make reliable the
		estimates. Hence in Section \ref{S5} we develop a statistical algorithm to
		validate \emph{a posteriori} our estimation by checking the underlying hypotheses
		by means of the obtained estimate. In Section \ref{S6} we apply the proposed method
		to estimate the conditional firing rate of simulated dependent ISIs. For this aim we
		resort to the stochastic two compartment model proposed in \citet{Lansky}.
		Indeed \citet{BenSac} recently showed that suitable choices of the
		parameters of this simple model allow to generate sequences of ISIs that
		are statistically Markov, ergodic and stationary. Finally, in Section \ref{S7} we
		apply our results to in vitro experimental data.

	\section{Definitions of the firing rate}
	
		\label{S2}
		The firing rate can be defined in many different ways. Here we report the most usual ones, their relationships and which spike
		train features best enlighten each of them. The ability of each  definition to catch the nature of neural code strongly depends on the nature
		of the observed spike train. Specifically we distinguish the case of independent identically distributed ISIs from the case with dependent ISIs.
		
		Let us describe the spike train through the ISIs and let $T$ be a random variable with the same distribution of the ISIs, shared by the
		entire spike train. The most used definitions \citep[see among others][]{Burkitt,Rullen} are 
		the inverse of the average ISI
		\begin{align}
			\label{fr}
			1/\e(T),
		\end{align}
		which is commonly called \emph{firing rate}, and
		the expectation of the inverse of an ISI
		\begin{align}
			\label{inv}
			\e(1/T),
		\end{align}
		called the \emph{instantaneous mean firing rate}.
		Values obtained with the second definition are always larger than those computed on the considered interval
		through $1/\e(T)$ \citep{LRS}.
		Identical distribution of ISIs is required to give sense to these definitions. 
	
		A different description of spike trains makes use of the number of spikes occurred up to time $t$, i.e.\ the counting process
		$N(t)$, $t \ge 0$ \citep[see e.g.][]{Cox_libro}. The quantity $N(t)/t$, is a measure of the instantaneous firing rate accounting for the total
		number of spikes $N(t)$ up to time $t$.
		The firing rate \eqref{fr} and the quantity $N(t)/t$ are connected through the well-known asymptotic formula
		\begin{equation}
			\label{22}
			\frac{1}{\e(T)} = \lim_{t\rightarrow\infty}\frac{\e(N(t))}{t}
		\end{equation}
		\citep{co,Rudd}.
		An alternative definition of the instantaneous firing rate is \citep[e.g.][]{Johnson}
		\begin{equation}
			\label{TFR}
			\lambda(t) = \lim_{\Delta t\rightarrow 0}\frac{\e[N(t+\Delta t)-N(t)]}{\Delta t}.
		\end{equation}   
		This last expression does not consider the cumulative property of $N(t)/t$ that involves the whole history of the spike train.
		When the ISIs have a common distribution, it can be expressed in terms of the ISIs distribution through 
		\begin{equation}
			\label{ITFR}
			\lambda(t) = \frac{f(t)}{\p(T>t)},
		\end{equation} 
		where $f(t)$ is the probability density function of the random ISI $T$ \citep[][Chapter 6]{CL}.
		
		\begin{example}[Exponential ISIs with refractory period]
			To illustrate the use of these quantities let us consider a spike train with exponentially distributed ISIs, which cannot be shorter
			than a constant $\delta$. This constant $\delta$ models the refractory period of a neuron \citep{LRS}.
			Then the ISI distribution is $F(t)=\p(T\leq t) = 1-\exp\lbrace-(t-\delta)\rbrace$, for every $t\geq\delta$.
			In this case the firing rate \eqref{fr} and
			the mean instantaneous firing rate \eqref{inv} are, respectively,
			\begin{align}
				\frac{1}{\e(T)} & = \left( \int_{\delta}^{\infty} te^{-(t-\delta)}dt \right)^{-1} = \frac{1}{\delta + 1}, \\
				\e\left(1/T\right) & = \int_{\delta}^{\infty} t^{-1}e^{-(t-\delta)}dt = e^\delta \Gamma(0,\delta) < \infty, \qquad \forall \, \delta > 0,
			\end{align}
			where $\Gamma(a,b)$ is the incomplete Gamma function.
			On the other hand if we compute the instantaneous firing rate by \eqref{TFR} or, equivalently, by \eqref{ITFR}, we get
			\begin{equation}
				\label{ExFR}
				\lambda(t) = 
					\begin{cases}
						0, & 0<t<\delta, \\
						1, & t\ge \delta.
					\end{cases}
			\end{equation}
			We explicitly remark that we would obtain the same firing rates in the case of dependent ISIs with the same common distribution,
			as \eqref{fr}, \eqref{inv} and \eqref{ITFR} do not consider any possible dependence on spike train history. 
		\end{example}
		
		More realistic instances should consider either dependent and/or not identically distributed random variables.
		In particular, defining the firing rate as the
		instantaneous firing rate (\ref{TFR}), conditioned on the spike train history 
		\citep{Snyder},
		\begin{equation}
			\label{CTFR}
			\lambda^*(t)=\lambda(t|N(s), 0<s\le t ) =\lim_{\Delta t\rightarrow 0}\frac{\e\left[N(t+\Delta t)-N(t)|N(s), 0<s\le t\right]}{\Delta t},
		\end{equation}
		makes possible to account for ISIs dependencies.
		Mathematically, function (\ref{CTFR}) is known as the conditional intensity function of the underlying counting process $N(t)$.
		It measures the probability of a spike in $[t,t+\delta t)$ when the presence of a spike depends on the history of the spike train.
		If the process is an inhomogeneous Poisson process, $\lambda^*(t)$ coincides with the rate of the process.
		It can also be defined in terms of the ISIs $T_i,i \geq 1$ given the past sequence of the ISIs $\{T_{j}\}_{j=1,...,(i-1)}$.
		Consider a fixed time interval $[0,L]$, $0<L<\infty$ and  denote by $l_1, l_2, \ldots, l_{N(L)}$ the ordered set of the
		firing epochs until time $L$.
		Then the ISIs sequence can be expressed as $\lbrace T_i = l_i-l_{i-1}, i\geq 1,l_0=0 \rbrace$ and the conditional probability density is
		\begin{equation}
			\label{hazard2b}
			f_i(t| T_j, j=1,\dots, i-1) = \frac{d}{dt} \p (T_i\leq t|T_j, j=1,\dots, i-1).
		\end{equation} 
		It is now possible to introduce the alternative definition \citep[][Section 7.2]{Daley}, 
		\begin{equation}
			\label{lambda}
			\lambda^*(t)= 
			\begin{cases}
			     h(t), & 0<t\leq l_1, \\
			     h_i(t-l_{i-1}|T_j, j=1,\dots, i-1), & l_{i-1}<t\leq l_{i}, \: i\geq 2.
			\end{cases}
		\end{equation}
		This definition can be proven equivalent to (\ref{CTFR}). Here the functions 
		\begin{subequations}
			\begin{equation}
				\label{hazard1a}
				h(t)=-\frac{d}{dt}\ln\left[S(t)\right]=\frac{f(t)}{S(t)},
			\end{equation}
			\begin{align}
				\label{hazard1b}
				h_i(t|T_j, j=1,\dots, i-1) & =-\frac{d}{dt}\ln\left[S_i(t|T_j, j=1,\dots, i-1)\right] \notag \\
				& =\frac{f_i(t| T_j, j=1,\dots, i-1)}{S_i(t|T_j, j=1,\dots, i-1)},
				\qquad i\geq2,
			\end{align}
		\end{subequations}
		are the ISI hazard rate function and the conditional ISI hazard rate function, respectively.
		Furthermore, $S(t)=1-\p(T\leq t)=1-\int_0^t f(s)\,ds $ and
		$S_i(t|T_j, j=1,\dots, i-1)=1-\p(T_i\leq t|T_j, j=1,\dots, i-1)
		=1-\int_0^t f_i(s|T_j, j=1,\dots, i-1)\,ds$ are the corresponding survival functions.
		
		Next in order of complexity to renewal processes, are point processes characterized by successive inter-events intervals that
		form a first order Markov chain. 
		For a renewal process the instantaneous firing rate depends only on the time elapsed since the last spike occurred, while for a first order
		Markov point process it depends also on the previous ISI.
		In the following we determine the conditional instantaneous firing rate for a model with this property.
		Note that the idea to consider inter-events models according with a first order Markov chain is not new. A first example
		in this direction is discussed in \citep{Lamp} where a marked Poisson process characterized by Markov inter-event times is discussed.
		Other classical examples of first order Markov inter-event times for point processes are discussed in \citet{Law}. Here we
		introduce an alternative example illustrating also the use of copulas to generate inter-event times with assigned marginal
		distribution that determine a Markov chain.
	
		\begin{example}[Exponential ISIs with refractory period II]
			\label{ued}
			Let us reconsider the example with exponential ISIs in presence of a refractory period. It is obvious that the conditional
			instantaneous firing
			rate coincides with the unconditional instantaneous firing rate (\ref{ExFR}) when we have independent ISIs.
			 
			Then let us assume the presence of dependence between ISIs.
			Avoiding any attempt to give a biological significance to our example let us here hypothesize that
			\begin{itemize}
				\item the ISI sequence is Markov, i.e.\ in \eqref{lambda}, we have $h_i(t-l_{i-1}|T_j,j=1,\dots,i-1) = h_i(t-l_{i-1}|T_{i-1})$,
				\item the ISIs $T_i$ and $T_{i+1}$, $i\geq 1$, are dependent with joint distribution function
				\begin{align*}
					\p(T_i\leq \tau,T_{i+1}\leq \vartheta)=F(\tau)F(\vartheta)\lbrace 1+[1-F(\tau)][1-F(\vartheta)]\rbrace,
				\end{align*}
				where $\tau,\vartheta>0$, $F(t)=\p(T\le t)$,
				that corresponds to considering a copula distribution of the Farlie--Gumbel--Morgenstern family
				(see \citet{Nelsen}, formula 3.2.10),
				\begin{align*}
					C_{\alpha}(u,v) = uv\left(1+\alpha (1-u)(1-v)\right),
				\end{align*}
				with $\alpha=1$.
			\end{itemize}
			If we compute the conditional instantaneous firing rate (\ref{CTFR}), according to equation (\ref{lambda}), we get
			$\lambda^*(t)=1$ for $\delta<t\leq l_1$, $\lambda^*(t)=0$, for $0<t\le \delta$, and 
			\begin{equation}
				\label{norn}
				\lambda^*(t)=
				\begin{cases}
					0, & l_{i-1} < t \le l_{i-1}+\delta, \\
					\frac{1+(1-2e^{-(t-l_{i-1}-\delta)})(1-2e^{-(T_{i-1}-\delta)})}
					{2-e^{-(t-l_{i-1}-\delta)}-2e^{-(T_{i-1}-\delta)}+2e^{-(t-l_{i-1}-2\delta +T_{i-1})}}, & l_{i-1}+\delta <t\leq l_{i},
				\end{cases}
			\end{equation}
			with $i\ge 2$.
			Despite the exponential distribution of the ISIs,
			we note that dependent ISIs determine a time dependent instantaneous firing rate (see Fig.\ \ref{mouse} for an example;
			in grey, for comparison, the instantaneous firing rate \eqref{TFR}).
			\begin{figure}
				\centering
				\includegraphics[scale=0.68]{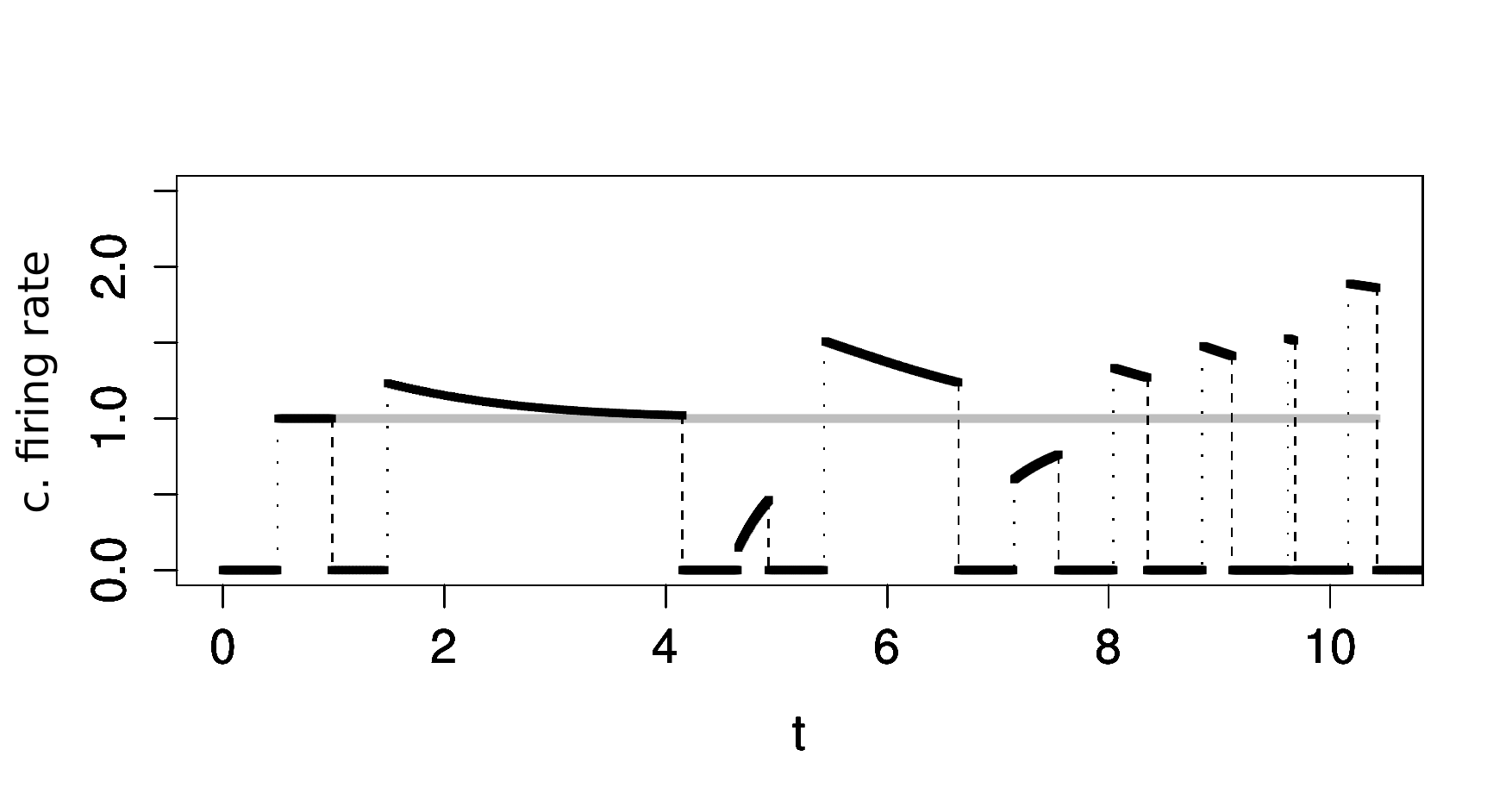}
				\caption{\label{mouse}A realization of the theoretical conditional instantaneous firing rate \eqref{norn} (black) together
				with the corresponding instantaneous firing rate \eqref{ExFR} (grey). Here the parameters are fixed at $(\lambda,\delta)
				=(1,0.5)$ and the recorded ISIs are indicated by successive dashed vertical lines.
				Note that the conditional instantaneous firing rate is \emph{updated} after each recorded spike.
				Except the initial period of time after each spike where the conditional instantaneous firing rate is zero
				(due to presence of refractoriness through the parameter $\delta$), in general
				it is not constant. Note that the unconditional and conditional firing rates coincide for $0<t<\delta$.
				In order to clarify the contribution of the dependence between consecutive ISIs to the
				behavior of the conditional instantaneous firing rate, note that after a very short ISI (such as the one starting right after time
				$t=4$) the conditional instantaneous firing rate becomes very high (after the time lapse $\delta$ during which is zero).
				Viceversa, the ISI recorded around time $t=6$, which is quite long, implies a succeeding low conditional
				instantaneous firing rate. Furthermore notice how the conditional instantaneous firing rate exhibits a relaxation
				towards the instantaneous firing rate (grey) as time passes (see for instance the ISI starting around $t=1$). This
				is connected to the fact that memory (dependence) is less and less important as time flows.}
			\end{figure}
		\end{example}
		
		The estimation of the firing rate makes use of the observed ISIs or of the observed counting process. Usually the estimators
		\begin{align}
			1/\overline{T}&=n/\sum_{i=1}^n T_i, \label{FR} \\
			\overline{1/T}&=\frac{1}{n}\sum_{i=1}^n \frac{1}{T_i}, \label{IFR}
		\end{align}
		are used for the inverse of the average ISI and for the instantaneous mean firing rate, respectively. The ratio between the number of spikes in
		$(0,t)$	and $t$ can be used to estimate $N(t)/t$. While the considered estimators have a very intuitive meaning if the ISIs
		are independent and identically distributed they lose their significance for dependent or not identically distributed ISIs. 
		When the process is not stationary these estimators are applied on successive suitable time windows in order to detect
		the time evolution of the observed firing rate.
	
		Estimation techniques for equations \eqref{ITFR} and \eqref{lambda} depend upon the knowledge of $f(t)$ and $f_i(t| T_j, j=1,\dots, i-1)$,
		$i=2,3,\dots$, respectively.
		When these densities are given or hypothesized from data, classical parametric methods can be applied \citep{kalb}.
		Hazard rate function estimators assuming the independence of the sample random variables have good convergence properties,
		but their application to dependent neural ISIs is not legitimate. A contribution to the study of hazard rate function estimation
		in presence of dependence is considered in \citet{muller,fark2} in the framework of adaptation analysis.
		In the same framework, \citet{naud} discuss a quasi-renewal approach to conditional firing rate estimation (see also the references therein).
		A contribution to the study of the stationary case is the subject of the following sections.

	\section{Firing rate estimation}

		\label{S3}
		The definition (\ref{lambda}) of the conditional instantaneous firing rate depends on the hazard rate functions (\ref{hazard1a})
		and (\ref{hazard1b}) of the underlying ISI process. Hence our firing rate estimation problem is strictly connected to the estimation
		of these hazard rate functions.
		
		We model the ISIs as a Markov, ergodic and stationary process. Hence the ISIs are identically distributed random variables with shared
		unconditional probability density function $f(t)$. Furthermore, due to the Markov hypothesis, the conditional probability density functions
		(\ref{hazard2b}) become 
		\begin{equation}
			\label{16}
			f_i(t|T_{i-1}=\tau, T_j=t_j, j=1,\dots,i-2) = f_i(t| T_{i-1}=\tau)=f(t| \tau), \qquad \forall \, i\geq 1.
		\end{equation}
		Therefore, the conditional hazard rate function (\ref{hazard1b}) simplifies to
		\begin{align}
			h(t| \tau)&=\frac{f(t| \tau)}{S(t| \tau)}, \label{h2}
		\end{align}
		where $S(t| \tau)=1-\int_0^t f(s|\tau)\,ds$ is the associated conditional survival function.

		In order to estimate the instantaneous conditional firing rate \eqref{lambda} we propose the following non-parametric
		estimator for the ISI conditional hazard rate function \eqref{h2}:
		\begin{equation}
			\label{hazardc}
			\hat{h}_n(t|\tau)=\frac{\hat{f}_n(t|\tau)}{\hat{S}_n(t| \tau)},
		\end{equation}
		where $\hat{S}_n(t|\tau)=1-\int_0^t \hat{f}_n(s|\tau)\,ds$ and \citep{Arfi,Ould-Said} 
		\begin{equation}
			\label{cond-estimator}
			\hat{f}_n(t|\tau)=\frac{\hat{f}_n(\tau,t)}{\hat{f}_n(\tau)},
		\end{equation}
		with
		\begin{align}
			\label{kernel2}
			\hat{f}_n(\tau,t)=\frac{1}{nc_n^{2}}
			\sum_{i=1}^n K_1\left(\frac{\tau-T_i}{c_n}\right)K_2\left(\frac{t-T_{i+1}}{c_n}\right),
		\end{align}
		\begin{align}
			\label{kernel}
			\hat{f}_n(t)=\frac{1}{nc_n}
			\sum_{i=1}^n K_1\left(\frac{\tau-T_i}{c_n}\right).
		\end{align}
		The estimator \eqref{cond-estimator}
		is a uniform strong consistent estimator for the ISI conditional probability density function \eqref{hazard2b} on any compact interval
		$[0,J] \subseteq \R_+$.
		Here $\lbrace c_n \rbrace$ is a sequence of positive real numbers satisfying
		\begin{equation}
			\label{1}
			\lim_{n\rightarrow \infty} c_n=0 ,\qquad \qquad 
			\lim_{n\rightarrow \infty}nc_n=\infty,
		\end{equation} 		
		and $K_j$, $j=1,2$, are real-valued kernel
		functions verifying
		\begin{equation}
			\label{3}
			K_j(t)\geq 0, \qquad \int_{-\infty}^{\infty}K_j(t) \, dt = 1, \qquad
			\lim_{|t|\rightarrow \infty}|t| \, K_j(t)=0.
		\end{equation}
		Moreover, we assume that the kernels have bounded variation and that $K_1$ is strictly positive.

		Note that, in order to estimate, the use of a sample of ${T_i}$ implies to start the observation in correspondence of a spike time,
		i.e. an arbitrary event. Hence, according with \citet{Law} we perform a \textit{synchronous sampling}.
		A different approach could consider an arbitrary starting time; relations between these two cases were discussed in
		\citet{mcfadden}.

		\begin{nb}		
			We do not consider the value of the instantaneous firing rate on the first ISI because of the lack of knowledge
			on the preceding ISI. However, it is possible to estimate the unconditional firing rate through
			\begin{equation}
				\label{hazardl}
				\hat{h}_n(t)=\frac{\hat{f}_n(t)}{\hat{S}_n(t)},
			\end{equation}
			where $\hat{S}_n(t)=1-\int_0^t \hat{f}_n(s)\,ds$.
		\end{nb}

		In Figure \ref{figfig} we reconsider Example \ref{ued} for which we compare the theoretical conditional instantaneous firing rate \eqref{norn}
		with its conditional estimator \eqref{hazardc}.
		\begin{figure}
			\centering
			\includegraphics[scale=.6]{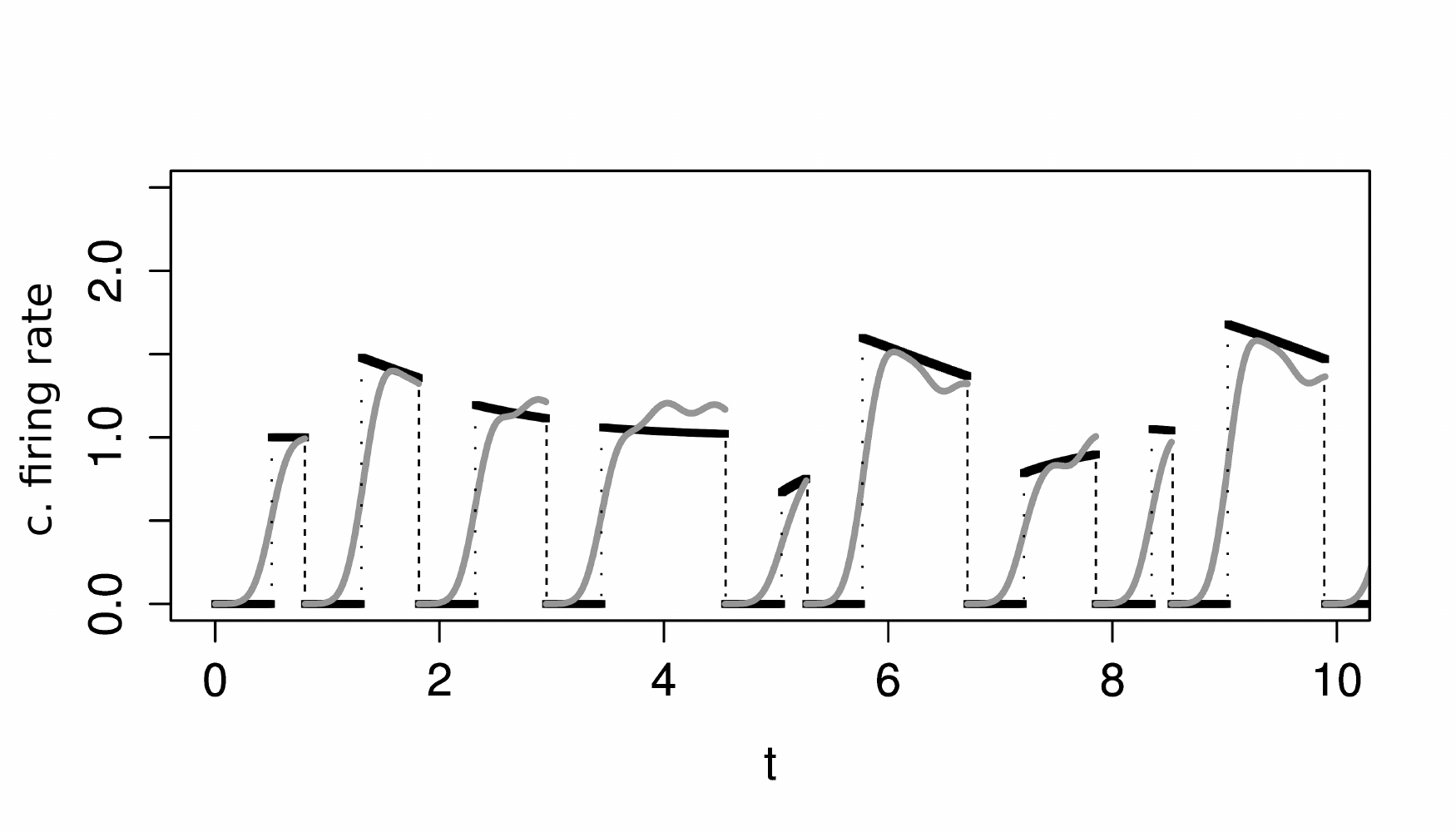} \\
			\vspace{-1.3cm}
			\includegraphics[scale=.6]{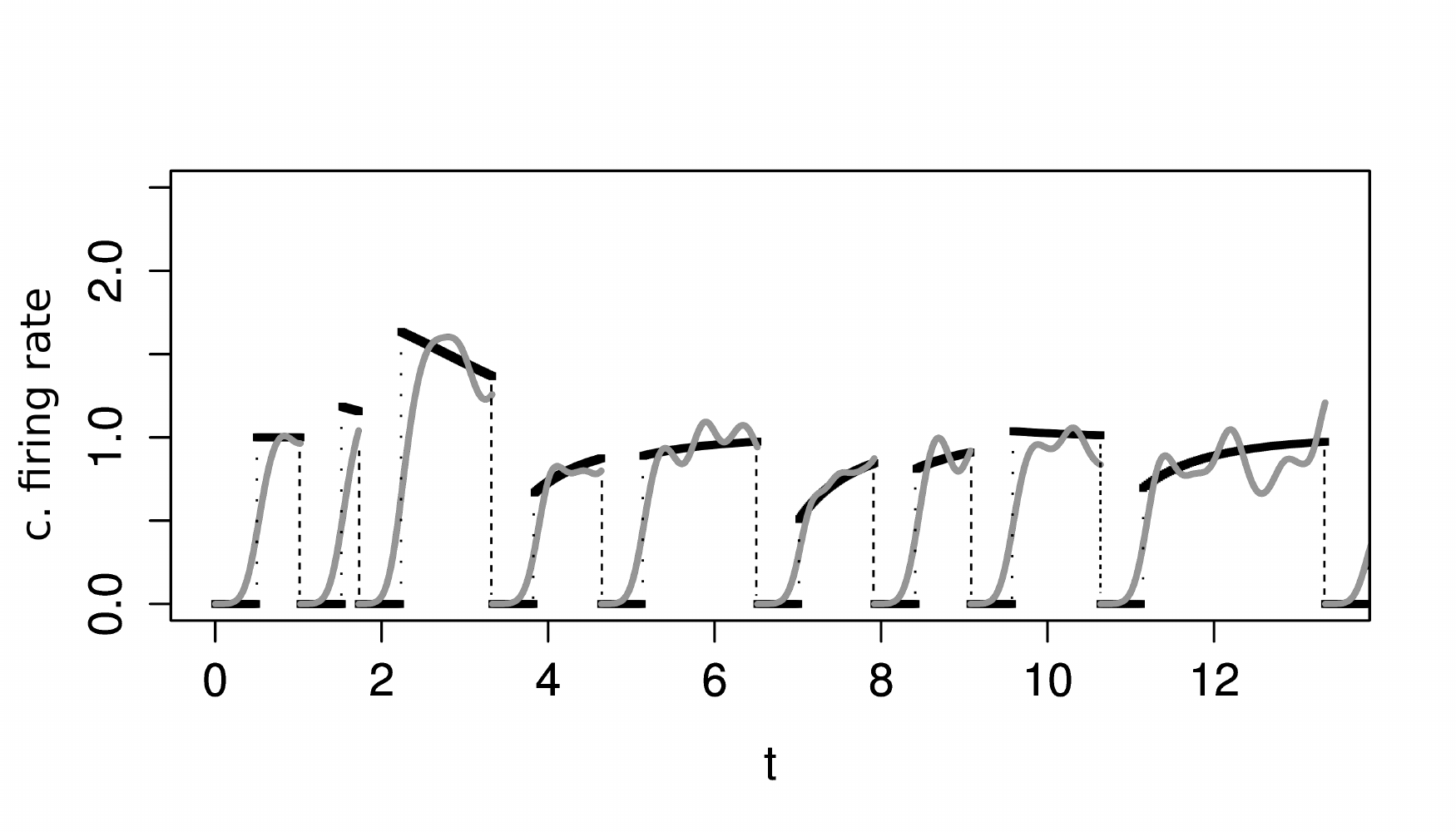}			
			\caption{\label{figfig}Two different realizations of the theoretical conditional instantaneous firing rate \eqref{norn}
			(black line) together with its estimate \eqref{hazardc} (grey line). The parameters are $(\lambda,\delta)=(1,0.5)$.}
		\end{figure}		
		Note furthermore that the use of \eqref{hazardl} to estimate \eqref{hazard1a} in \eqref{lambda}
		is an approximation because the sampling does not start at time zero of the theoretical model.
		
		We have the following theorem.	
	
		\begin{theo}
			\label{hazarda} 
			Let us consider a simple point process $N(t)$ (i.e.\ a point process admitting at most one single event at any time),
			with Markov, ergodic and stationary inter-event intervals
			$T_i$, $i\geq 1$. Under some regularity conditions on the ISI probability density functions and the kernels $K_1$ and $K_2$, for all
			sequences $\lbrace b_n\rbrace$ satisfying
			\begin{equation}
				\lim_{n\rightarrow +\infty}\frac{nb_n^4}{\ln n}=\infty,
			\end{equation}
			(see Appendix \ref{baba} for complete details)
			the estimators (\ref{hazardl}) and (\ref{hazardc}) are uniform strong consistent estimators of the hazard rate functions
			(\ref{hazard1a}) and
			(\ref{h2}) on $[0,M]$, $0<M\le L$, i.e.\
			\begin{equation}
				\lim_{n\rightarrow+\infty}\sup_{t\in[0,M]}\left|\hat{h}_n(t)-h(t)\right|=0 \qquad \text{a.s.}
			\end{equation}
			\begin{equation}
				\lim_{n\rightarrow+\infty}\sup_{(\tau,t)\in[0,M]^2}\left|\hat{h}_n(t| \tau)-h(t| \tau)\right|=0 \qquad \text{a.s.}
			\end{equation}
		\end{theo}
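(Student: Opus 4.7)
The plan is to reduce consistency of the hazard-rate estimators to the uniform strong consistency of the kernel density estimators $\hat f_n(t)$ and $\hat f_n(t|\tau)$. The marginal density estimator is the classical Parzen--Rosenblatt estimator applied to a stationary ergodic sequence, while the conditional density estimator is exactly the one studied by \citet{Arfi,Ould-Said}, whose uniform strong consistency was already quoted when the estimator was introduced. Under the stated regularity conditions and the bandwidth condition on $\{b_n\}$, I would take as inputs
\begin{equation*}
\sup_{t\in[0,M]}\bigl|\hat f_n(t)-f(t)\bigr|\to 0\qquad\text{and}\qquad \sup_{(\tau,t)\in[0,M]^2}\bigl|\hat f_n(t|\tau)-f(t|\tau)\bigr|\to 0,\qquad\text{a.s.}
\end{equation*}

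The first step is to transfer this to the survival functions. Since $\hat S_n(t)-S(t)=-\int_0^t(\hat f_n(s)-f(s))\,ds$, one has
\begin{equation*}
\sup_{t\in[0,M]}\bigl|\hat S_n(t)-S(t)\bigr|\le M\sup_{t\in[0,M]}\bigl|\hat f_n(t)-f(t)\bigr|,
\end{equation*}
and the same bound, uniform in $\tau$, holds for $\hat S_n(\cdot|\tau)$. The second step is to secure a lower bound on the denominator: on $[0,M]$ with $M\le L$ chosen strictly inside the effective support, continuity of $S$ (respectively continuity of $S(\cdot|\cdot)$ on $[0,M]^2$, by the regularity assumptions of Appendix~\ref{baba}) yields some $\gamma>0$ with $S\ge\gamma$ and $S(\cdot|\cdot)\ge\gamma$. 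Uniform convergence then forces $\hat S_n\ge\gamma/2$, and similarly for $\hat S_n(\cdot|\cdot)$, for all $n$ large enough, almost surely.

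The third step is the standard ratio decomposition
\begin{equation*}
\hat h_n-h=\frac{\hat f_n-f}{\hat S_n}+\frac{f\,(S-\hat S_n)}{\hat S_n S},
\end{equation*}
whose sup-norm on $[0,M]$ is dominated by a deterministic constant times $\sup|\hat f_n-f|+\sup|\hat S_n-S|$, both of which tend to zero almost surely by the two previous steps. The conditional case is handled identically on $[0,M]^2$, using the bivariate consistency of $\hat f_n(\cdot|\cdot)$ and the uniform lower bound on $S(\cdot|\cdot)$.

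The main obstacle, and the reason detailed hypotheses are pushed to the appendix, is precisely the uniform consistency of the conditional density estimator: since $\hat f_n(t|\tau)=\hat f_n(\tau,t)/\hat f_n(\tau)$ is itself a ratio, controlling it uniformly in $\tau\in[0,M]$ requires $f$ bounded away from zero on $[0,M]$ together with smoothness conditions making both the joint and the marginal kernel bias vanish uniformly, which is where the bandwidth requirement $nb_n^4/\ln n\to\infty$ enters. Once the Arfi--Ould-Said result is available on such a compact set, the ratio-to-hazard passage sketched above is routine and yields both uniform convergence statements.
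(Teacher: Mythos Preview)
Your proposal is correct and mirrors the paper's own argument almost exactly: the paper proves an auxiliary lemma giving $\sup_{[0,M]}|\hat S_n-S|\le M\sup_{[0,M]}|\hat f_n-f|$ (and the conditional analogue via \citet{Arfi,Ould-Said}), then uses the same add-and-subtract decomposition $\hat h_n-h=(\hat f_n-f)/\hat S_n + f(S-\hat S_n)/(\hat S_n S)$ and concludes by boundedness and strict positivity of $S$, $\hat S_n$. Your explicit eventual lower bound $\hat S_n\ge\gamma/2$ is slightly more careful than the paper, which simply asserts that $\hat S_n$ is bounded and strictly positive on $[0,M]$.
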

	
		First, note that since on each observed bin it is impossible to record more than a single spike, it is reasonable
		to model the spike train through a simple point process.
		Second, an estimator for $\lambda^* (t)$ can be given as
		\begin{align}
			\label{est}
			\hat{\lambda}^*_n (t) = \hat{h}_n(t-l_{i-1}|T_{i-1}=t_{i-1}), \qquad l_{i-1}<t\le l_i.
		\end{align}
		Formula \eqref{est} holds for $i=2,\dots, n$, because
		for an ergodic and stationary process, the estimation of $\lambda^*$ in the first ISI cannot be performed in the lack
		of knowledge on the preceding ISI.
		
		\begin{nb}
			The Markov, the ergodic and the stationary hypotheses concern the ISIs processes, i.e.\ the times $\{T_i\},i=1,2,\dots\:$.
			If the ISIs fulfill these hypotheses \eqref{16} holds. Note that the index set of the process $\{T_i\}$ is discrete and coincides with
			the positives integers. This hypothesis does not imply any stationary hypothesis on the process $N(t)$, $t>0$, that is an increasing
			process counting the number of spikes up to time $t$. Note that this last process has a continuous time index set. Despite these facts
			the two processes are related. Indeed it holds
			\begin{equation}
				\label{pro}
				\left\{N(t)<n\right\}=\left\{\sum_{i=1}^{n}T_i>t\right\}.
			\end{equation}		
		\end{nb}

	\section{An \emph{a posteriori} validation algorithm}
	
		\label{S5}
		The firing rate estimator \eqref{est} has good asymptotic properties when ISIs can be modeled through stationary ergodic processes.
		Having a sample of ISIs, specific tests of stationarity can be performed, checking the behavior of their mean or of higher moments
		or even using \emph{ad-hoc} techniques as in \citet{saczuc}.
		These checks cannot guarantee strong stationarity of ISIs but can help to detect time dependencies. More difficult is the check of ergodicity.
		This is a model feature that cannot be tested from a single time record of ISIs. Wishing to analyzing the reliability
		of the proposed estimator, we present here an \emph{a posteriori} validation algorithm. The idea of such procedure
		is the following. We know that, due to the time-rescaling theorem,
		any simple point process with an integrable conditional intensity function $\lambda^*(t)$ may be
		transformed into a unit-rate Poisson process, by the random time transformation 
		$t\rightarrow \Lambda^*(t)=\int_0^t \lambda^*(u)\,du$ (see Appendix \ref{resc} for the complete statement of the theorem).
		Hence, we perform such transformation and we check whether
		the obtained point process is a unit-rate Poisson process. If we cannot reject the Poisson characterization
		of the transformed process, we conclude that our estimator is reliable, having correctly acted
		as a firing rate.
	
		Formalizing, we first compute the conditional firing rate estimator $\hat{\lambda}_n^*$,
		defined by equation (\ref{est}). Then, we perform the following time transformation
		\begin{equation}
			\label{lavagna}
			t \mapsto \hat{\Lambda}_n^*(t)=\int_0^t \hat{\lambda}_n^*(u)\, du . 
		\end{equation}
		Under this time-change, the ISIs $T_i=l_i-l_{i-1}$, $i\geq 1$ and $l_0=0$, become
		\begin{equation}
			\label{tildeT}
			\tilde{T}_i=\hat{\Lambda}_n^*(l_i)-\hat{\Lambda}_n^*(l_{i-1})=\int_{l_{i-1}}^{l_i}\hat{\lambda}_n^*(u)\,du, \qquad i=1,\ldots,n.
		\end{equation}
		If the firing rate estimator (\ref{est}) is reliable, i.e.\ if the hypotheses supporting its computation are verified,
		the transformed ISIs (\ref{tildeT}) should be independent and identically distributed (i.i.d.)
		exponential random variables with mean 1, according to the time-rescaling theorem. 
		Hence, a way to validate our estimator on sample data is to check the independence and exponential distribution of the transformed ISIs.
		
		We can perform a goodness-of-fit test to verify whether the sequence $\lbrace \tilde{T}_i, i=1,\dots,n\rbrace$
		follows the exponential distribution with mean 1. Then we can compute any dependence index, like the correlation coefficient $\rho$
		\citep{Spearman} or the Kendall's tau \citep{Kendall}, of the couples $(\tilde{T}_{i},\tilde{T}_{i+1})$, $i=1,\ldots,n-1$,
		to check whether the successive transformed ISIs are independent.
		
		Here we propose also another alternative test of the hypotheses on the transformed ISIs, based on the concept of independent copula.
		Intuitively the independent copula is the joint probability distribution of two independent uniform random variables.
		Any group of independent random variables is characterized by the independent copula
		(see Appendix \ref{cop}).
		Having a sample of random variables following a distribution $F_X(x)$ it can be transformed into a sample of uniform random variables
		through the transformation $Y=F_X^{-1}(X)$ \citep{Nelsen}. 
		When our estimation of $\lambda^*$ is reliable, the sample obtained through transformation \eqref{lavagna} should contain independent
		exponential random variables of parameter $1$. In particular it should be characterized by the independent copula.		
		To verify this, we consider the further ISI transformation,
		based on the exponential distribution assumption,
		\begin{equation}
			\label{uniform}
			Z_i=1-e^{-\tilde{T}_i}.
		\end{equation}
		When $T_i \sim \exp(1)$, $\lbrace Z_i, i\geq1 \rbrace$ is a collection of i.i.d.\ uniform random variables on $[0,1]$.
		Therefore, the copula of $Z_i$ and $Z_{i+1}$, $i=1,2,\ldots,n$, should be the independent copula with uniform marginals. Hence,
		another way to verify the reliability of \eqref{est} on the transformed ISIs is to perform a uniformity test
		(goodness-of-fit test for the uniform distribution on $[0,1]$) and a goodness-of-fit test for the independent copula
		on the collection $\lbrace Z_i, i\geq1 \rbrace$. In particular, we apply the goodness-of-fit test for copulas proposed in \citet{Genst}.
		
		See Appendix \ref{lavagna2} for a description of the steps of the proposed algorithm.

	\section{The stochastic two-compartment neural model}
	
		\label{S6}
		Various first order Markov models could be used to exemplify the proposed technique (see \citet{Law} and the papers cited therein).
		
		We consider here the stochastic two-compartment model proposed in \citet{Lansky}. It is a two-dimensional Leaky Integrate and Fire type
		model that generalizes the one-dimensional Ornstein--Uhlenbeck model \citep{ric}.
		The two-compartment, for specific choices of the parameters, can exhibit independence or Markov property of different orders.
		The rationale for choosing this model lies in the fact that we wanted to deal with samples eventually not verifying assumptions supporting
		the reliability of our estimator.
		In fact, this example gives us the opportunity to illustrate the use of our ``a posteriori validation algorithm''.

		The introduction of the second component allows us to relax the
		resetting rule of the membrane potential.
		In a recent paper we showed that
		spike trains generated by this model exhibit dependent ISIs for suitable
		ranges of the parameters \citep{BenSac}. Here we use ISIs generated by
		this model to estimate the conditional firing rate through the estimator
		proposed in this paper.

		In the stochastic two compartment model the neuron is described by two
		interconnected compartments, modeling the membrane potential dynamics of
		the dendritic tree and the soma, respectively.
		The dendritic component is responsible for receiving external inputs, while the somatic component emits outputs.
		Hence, external inputs reach indirectly the soma by the interconnection between the two compartments. 
	
		The model is then described by a bivariate diffusion process $\lbrace\mathbf{X}(t), t\geq 0\rbrace$, whose components $X_1(t)$ and $X_2(t)$
		model the membrane potential evolution in the dendritic and somatic compartment, respectively. Assuming that external inputs have intensity $\mu$
		and variance $\sigma^2$, the process obeys to the system of stochastic differential equations
		\begin{equation}
			\label{bicomp}
			\begin{cases}
				dX_1(t) = \lbrace -\alpha X_1(t)+\alpha_r\left[X_2(t)-X_1(t)\right]+\mu\rbrace dt+\sigma dB(t), \vspace*{3mm}\\
				dX_2(t) = \lbrace -\alpha X_2(t)+\alpha_r\left[X_1(t)-X_2(t)\right]\rbrace dt.
			\end{cases}
		\end{equation}
		Here $\alpha_r$ accounts for the strength of the interconnection between the two compartments, while $\alpha$ is the leakage
		constant that models the spontaneous membrane potential decay in absence of inputs.
	
		\begin{figure}
			\centering
			\includegraphics[scale=.53]{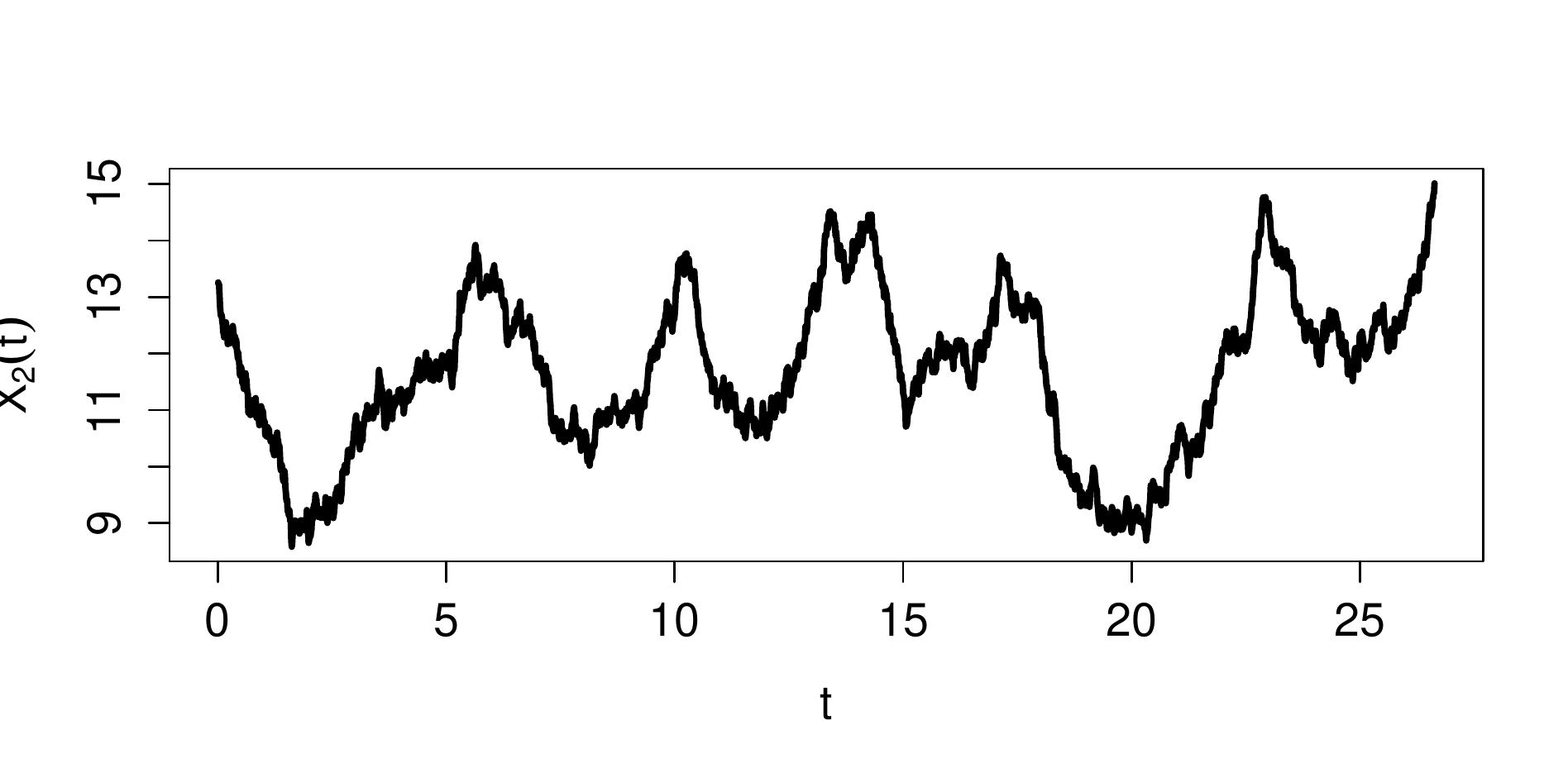} \\ \vspace{-1.2cm}
			\includegraphics[scale=.53]{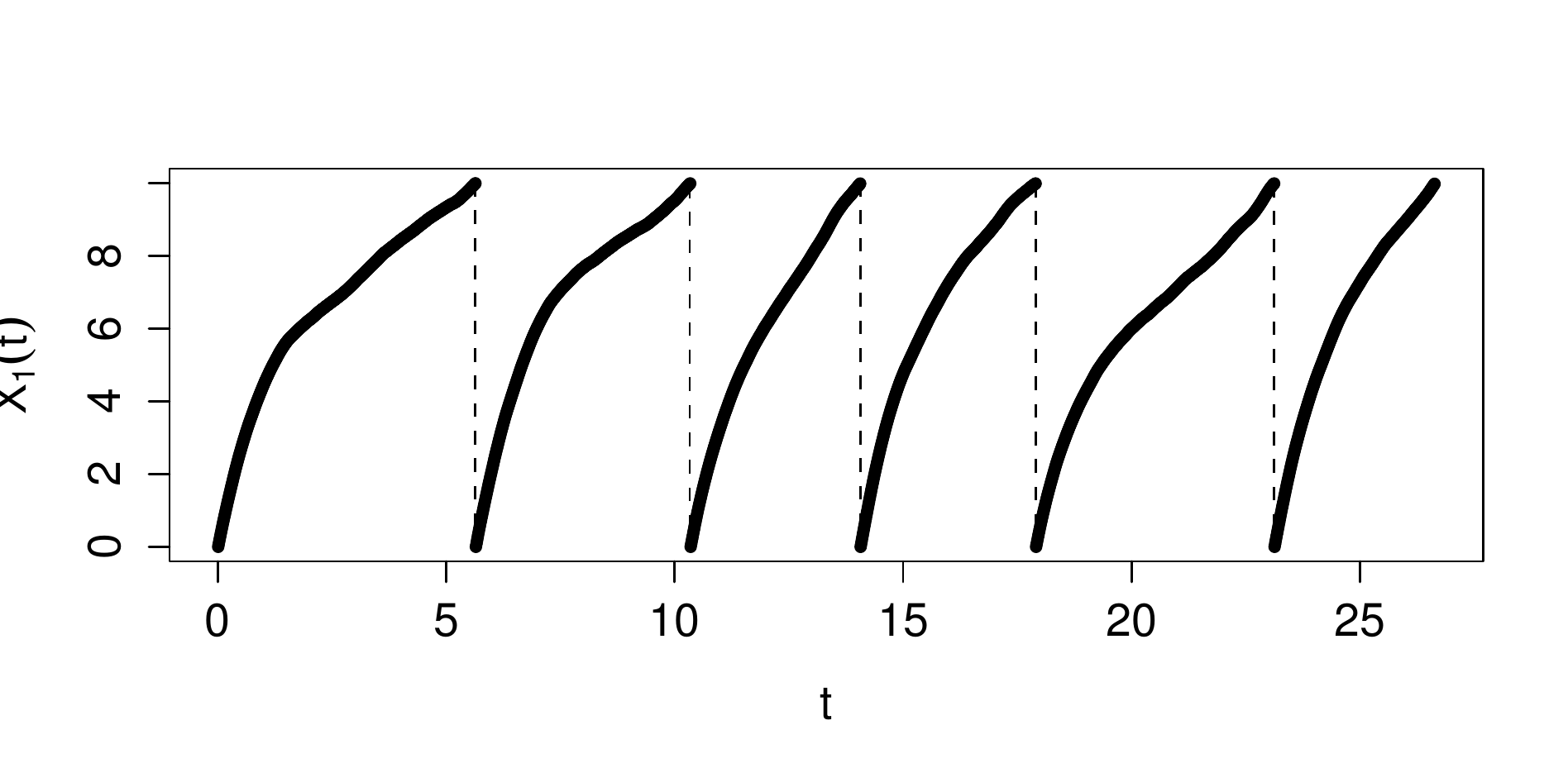} \\ \vspace{-1.2cm}
			\includegraphics[scale=.53]{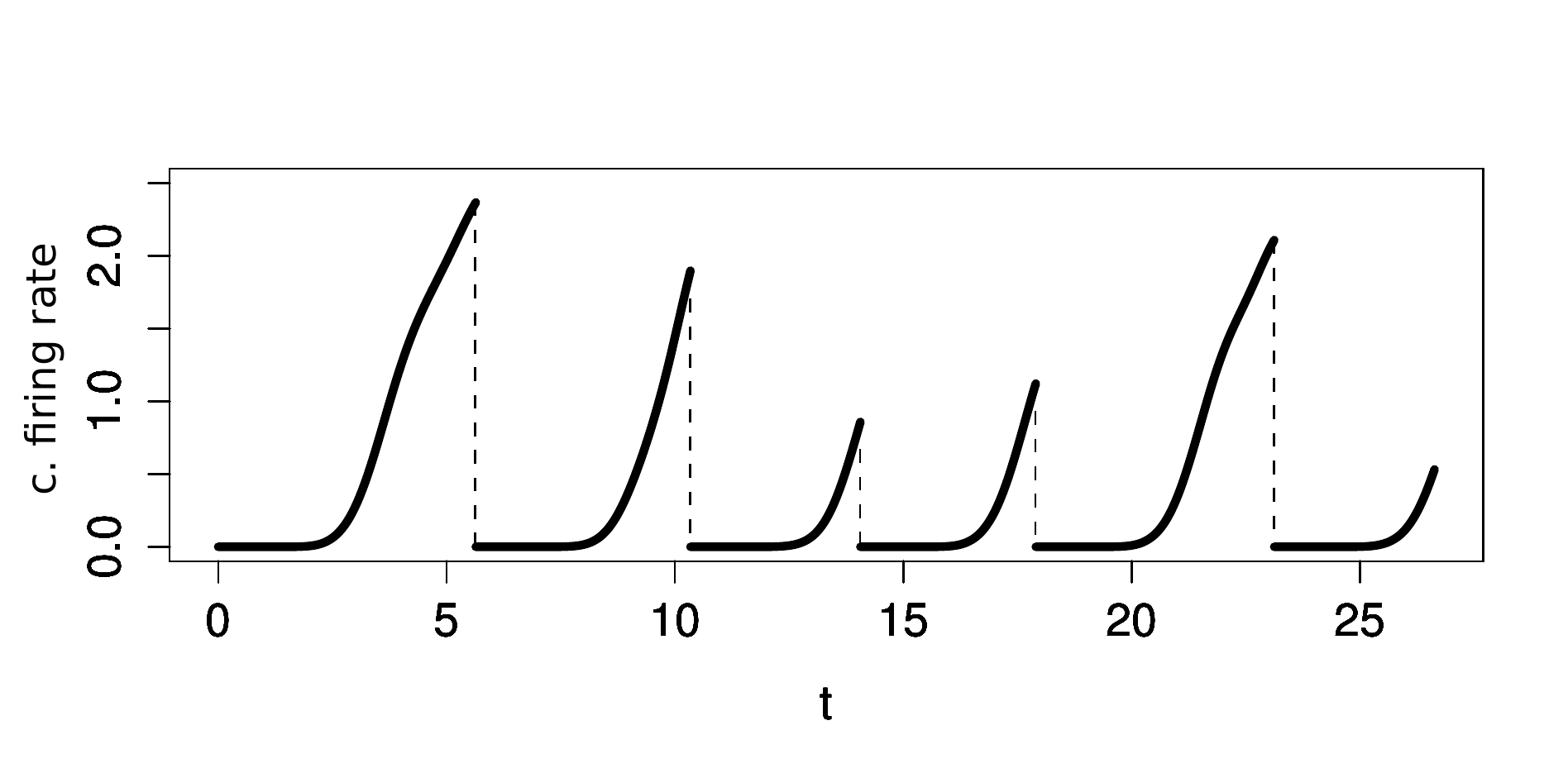}
			\caption{An example of the estimated conditional instantaneous firing rate for the two-compartment model described in Section \ref{S6}.
				In the upper and middle figures, the membrane potential respectively in the dendritic and somatic compartment is depicted.
				The lower figure shows the corresponding estimated conditional instantaneous firing rate $\hat{\lambda}^*(t)$.
				Here the parameters are set to $(\alpha,\alpha_r,\mu,\sigma,S)=(0.05,0.5,3.5,1,10)$. The estimation is performed
				by means of Gaussian kernels with standard deviation set at $1.5$. The ISIs here are rather short
				and the instantaneous firing rate rather high (but unknown). This can be understood
				by noticing that the conditional instantaneous firing rate is still growing when a new spike (and therefore
				a new conditioning event) occurs. The faster the instantaneous firing rate is, the sooner a conditioning event happens
				but still the presence of dependence between consecutive ISIs can be appreciated by considering
				the fact that after each recorded ISI the conditional instantaneous firing rate grows differently.}
		\end{figure}
		Whenever $X_2(t)$ reaches a characteristic threshold $S$ the neuron emits an action potential. Then $X_2(t)$ restarts
		from a resetting potential value that we assume equal to zero (a shift makes always possible this choice) while $X_1(t)$ continues its evolution.
		The absence of dendritic resetting makes the ISIs dependent on the past evolution of the neuron dynamics.
		Sensitivity analysis was applied in \citet{BenSac} to determine the
		model values of parameters that make its ISIs stationary but dependent. Here
		we simulate samples of 1000 ISIs from this model for different choices of
		the parameters, we determine the conditional firing rate \eqref{lambda}
		corresponding to each choice and we check the reliability of the obtained
		results by means of the algorithm proposed in the previous section. According
		to the hypotheses of Theorem \ref{hazarda} (see Appendix \ref{baba}) for our study we use: 
		\begin{enumerate}
			\item Gaussian kernels $K_1$ and $K_2$ with mean zero and standard deviation equal to $0.2$;
			\item Kernel weights $b_n=n^{-\beta}$ where $n$ is the sample size and $\beta=0.2$.
		\end{enumerate}
		We report the results of the algorithm in Table \ref{tab2}. The first three cases in
		Table \ref{tab2} correspond to Markov dependent ISIs as shown in
		\citet{BenSac}. The proposed estimator cannot be safely applied in the
		fourth case of Table \ref{tab2}. Indeed the use of algorithm of Section \ref{S5} shows that
		the hypotheses for its use are not verified. In this case, successive
		ISIs exhibit longer memory, as suggested in \citet{BenSac}. A variant
		of the proposed estimator can be easily developed to account for this longer
		memory. For example one should consider a rate intensity
		conditioned on the two previous ISIs.
	
		\begin{table}[h!]
			\renewcommand{\arraystretch}{1.3}
			\caption{Results of the uniformity test and the copula goodness-of-fit test of Algorithm \ref{Alg}, applied on ISIs simulated by the
				two-compartment model (\ref{bicomp}). In the last case the copula goodness-of-fit test fails, as the ISI process is statistically
				a Markov process of order 2. Indeed for values of $\mu$ close to the threshold $S$, the ISIs become very short and the evolution of
				the two-compartment model is more dependent on its past history (see \citet{BenSac} for details).}
			\label{tab2}
			\begin{center}
				\begin{tabular}{|c|c|c|}
					\hline
					\multirow{2}{*}{\textbf{Parameters}} 
					& \textbf{Uniformity test} & \textbf{Copula goodness-of-fit test} \\
					& \textbf{p-value} & \textbf{p-value}\\
					\hline
					$\alpha=0.05$, $\alpha_r=0.5$, & \multirow{2}{*}{0.88} & \multirow{2}{*}{0.97} \\
					$\mu=4$, $\sigma=1$, $S=10$ & & \\
					\hline
					$\alpha=0.05$, $\alpha_r=0.25$, & \multirow{2}{*}{0.84} & \multirow{2}{*}{0.62} \\
					$\mu=4$, $\sigma=1$, $S=10$ & & \\
					\hline
					$\alpha=0.05$, $\alpha_r=0.5$, & \multirow{2}{*}{0.69} & \multirow{2}{*}{0.87} \\
					$\mu=3.5$, $\sigma=1$, $S=10$ & & \\
					\hline					
					$\alpha=0.05$, $\alpha_r=0.5$, & \multirow{2}{*}{0.21} & \multirow{2}{*}{0.01} \\
					$\mu=8$, $\sigma=1$, $S=10$ & & \\
					\hline
				\end{tabular}
			\end{center}
		\end{table}
	
	\section{Application to experimental data}
	
		\label{S7}	
		
		We analyze here experimental data taken from the Internet\footnote{\texttt{http://lcn.epfl.ch/}$\sim$
		\texttt{gerstner/QuantNeuronMod2007/}}. This data were recorded and discussed by \citet{rauch}
		and correspond to single electrode in vitro recordings from rat neocortical pyramidal cells under random in vivo-like current
		injection. We limit our discussion to the analysis of the dataset file
		\texttt{07-1.spiketimes}. The injected colored noise current (Ornstein--Uhlenbeck process) determines dependencies between ISIs. For the analyzed
		dataset, the colored noise has a mean of 320 pA and a standard deviation of 41 pA.
		According to \citet{rauch}, cells responded with an action potential activity with stationary statistics that could be sustained
		throughout long stimulation intervals.
		The Kendall's Tau test on adjacent ISIs reveals a negative dependence; indeed $\tau = -0.118$ and the independence
		test furnishes us with a p-value equal to $0.0001$.
		Note that the lack of independence may be due both to the colored noise and to causes internal to the neuron's dynamics.
		Due to this fact it becomes interesting to estimate the conditional instantaneous firing rate (see Fig.~\ref{datidati}).
		The \emph{a posteriori} test for ergodicity (see Algorithm~\ref{Alg}) confirms the reliability of our estimation. Indeed
		the uniformity test on the transformed ISIs gives a p-value equal to 0.92 and the independence of the time-changed
		ISIs is confirmed by a test on the Kendall's Tau ($\tau = -0.0219$, $\text{p-value} = 0.47$).
		Furthermore, the goodness-of-fit copula test for independence copula gives a p-value equal to $0.99$.
		
		\begin{figure}
			\centering
			\includegraphics[scale=.56]{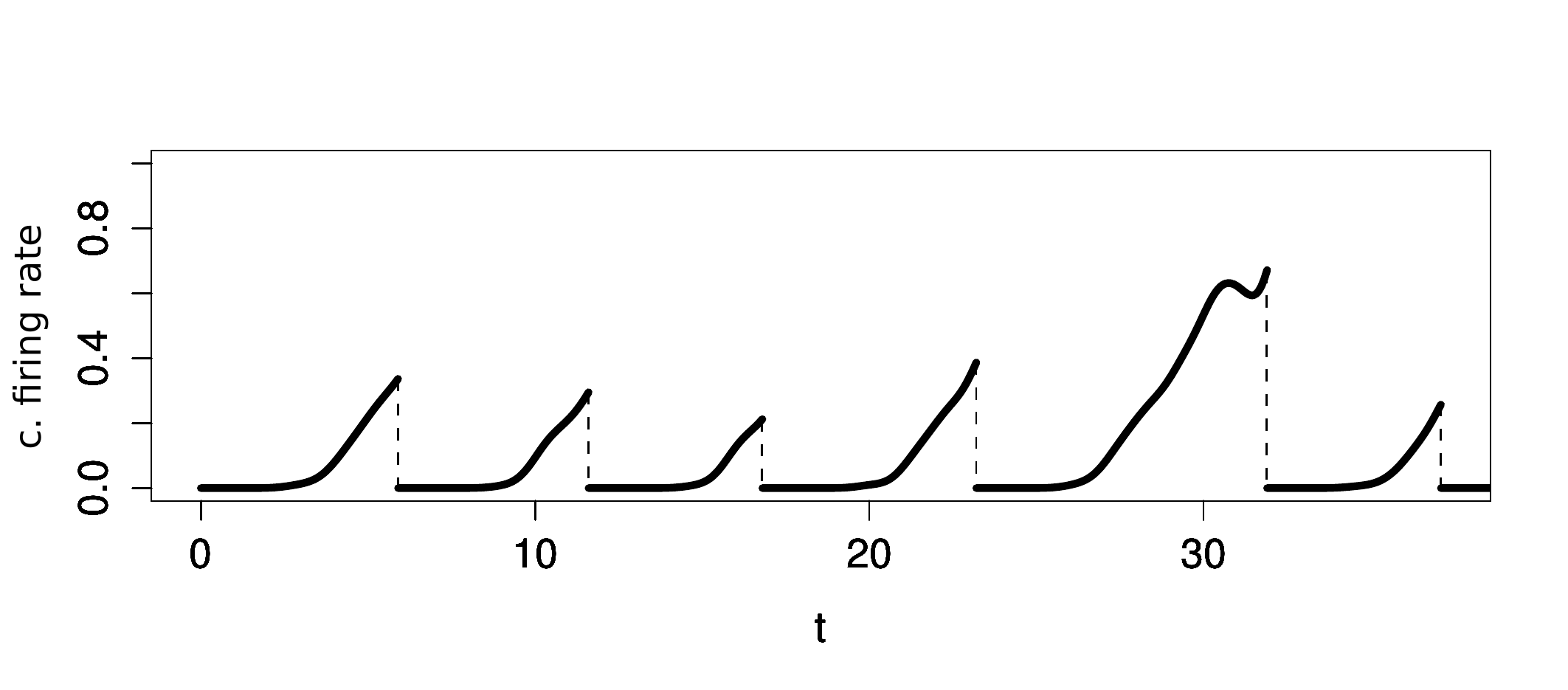}
			\caption{\label{datidati}An example of the estimated conditional instantaneous firing rate for real data as described in Section \ref{S7}.
			The estimation is performed with Gaussian kernels with standard deviation set at $1.5$. Time is measured in milliseconds.}
		\end{figure}

	\section{Conclusions}

		The analysis of the features exhibited by recorded ISIs often ground on the
		study of their statistics. The most used quantities in this framework are
		the firing rate and the instantaneous firing rate. Tipically its estimation
		is performed assuming that ISIs are independent despite experimental
		evidence on dependencies between successive ISIs. This choice may influence
		analysis results. Here we stressed consequences of ignoring dependencies on
		the the instantaneous rate estimation and we propose a non parametric
		estimator for the case of ergodic stationary ISIs with successive ISIs
		characterized by the Markov property. The use of such estimator could improve
		data analysis.
		
		We illustrated the use of the proposed estimator on simulated \ and on in
		vitro recorded data. Furthermore we proved its consistency property and we
		gave an algorithm to check the Markov ergodic stationary hypothesis on data.
		
		A natural extension of the proposed approach refers to data exhibiting
		memory also between non contiguous ISIs. Furthermore we plan to apply the
		method to non stationary data introducing their preliminary cleaning from
		possible periodicity or presence of trends.
	
	\subsection*{Acknowledgments}
	Work supported in part by University of Torino Grant 2012 \emph{Stochastic Processes and their Applications}
	and by project AMALFI (Universit\`a di Torino/Compagnia di San Paolo).

	\appendix
	
	\section{The time-rescaling theorem}
	
		\label{resc}
		For the sake of completeness we recall here the time-rescaling theorem that is a well-known result in probability theory.
		In neuroscience it is applied for example in \citet{Brown} and \citet{Gerhard} to develop goodness-of-fit tests for parametric point process
		models of neuronal spike trains.
	
		\begin{theo}[Time-rescaling theorem]
			\label{T-resc-theo}
			Let $N(t)$ be a point process admitting only unitary jumps, with integrable conditional intensity function
			$\lambda^*(t)$, modeling a spike train with interspike intervals $T_i=l_i-l_{i-1}$, $i\geq 1$, where $l_0=0$ 
			and $l_i$, $i\geq 1$, are the spiking epochs. We define
			\begin{equation}
				\label{Lambda}
				\Lambda^*(t)=\int_0^t \lambda^*(u)\, du.
			\end{equation}
			Then, under the random time transformation $t \mapsto \Lambda^*(t)$, the transformed process
			$\tilde{N}(t)=N({\Lambda^*}^{-1}(t))$ is a unit-rate Poisson process.
			
			Moreover the transformed ISIs
			\begin{align*}
				\tilde{T}_i=\Lambda^*(l_{i})-\Lambda^*(l_{i-1})=\int_{l_{i-1}}^{l_i}\lambda^*(u)du,
			\end{align*}
			are i.i.d.\ exponential random variables with mean 1, as they are waiting times of a unit-rate Poisson process. 
		\end{theo}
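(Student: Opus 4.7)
The plan is to establish that the transformed inter-event intervals are i.i.d.\ unit-mean exponentials, and then to invoke the classical characterization of the Poisson process through its inter-event times. The key structural identity linking $\lambda^*$ to the conditional distribution of each $T_i$ is
\begin{equation*}
\p(T_i>t\mid \mathcal{H}_{i-1})=\exp\Bigl(-\int_{l_{i-1}}^{l_{i-1}+t}\lambda^*(u)\,du\Bigr)=\exp\bigl(-[\Lambda^*(l_{i-1}+t)-\Lambda^*(l_{i-1})]\bigr),
\end{equation*}
where $\mathcal{H}_{i-1}$ denotes the history of the process up to the $(i-1)$-th spike. This identity is just the standard conditional survival representation obtained from the definition \eqref{CTFR} of the conditional intensity by the exponential formula.

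First, I would check that $\Lambda^*$ is almost surely continuous and strictly increasing (this follows from the integrability together with the implicit positivity of $\lambda^*$ needed for it to be a bona fide intensity), so that $(\Lambda^*)^{-1}$ is well defined and the random time change makes sense. Second, I would compute, for arbitrary $x\geq 0$,
\begin{equation*}
\p(\tilde T_i>x\mid \mathcal{H}_{i-1})=\p\bigl(\Lambda^*(l_{i-1}+T_i)-\Lambda^*(l_{i-1})>x\,\big|\,\mathcal{H}_{i-1}\bigr)=e^{-x},
\end{equation*}
the last step following by inverting $\Lambda^*$ on the event and plugging into the survival identity displayed above. Third, since the resulting distribution is $\mathrm{Exp}(1)$ and does not depend on $\mathcal{H}_{i-1}$, an induction on $i$ yields that each $\tilde T_i$ is independent of $\mathcal{H}_{i-1}$ and hence of $\tilde T_1,\dots,\tilde T_{i-1}$, establishing the i.i.d.\ claim.

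Finally, to conclude that $\tilde N(t)=N((\Lambda^*)^{-1}(t))$ is a unit-rate Poisson process, I would invoke the classical characterization: a simple counting process whose successive inter-event intervals are i.i.d.\ $\mathrm{Exp}(1)$ is exactly a homogeneous Poisson process of rate one. The main obstacle I expect concerns the rigorous treatment of the exponential survival formula when $\lambda^*$ is itself random, i.e.\ predictable with respect to the internal filtration of $N$, together with the measurability of $(\Lambda^*)^{-1}$. Under the stated assumptions (simplicity of $N$ and integrability of $\lambda^*$) these technicalities can be discharged by standard results from the general theory of point processes, e.g.\ \citet{Daley}, and the core computation reduces cleanly to the one-line change of variables above.
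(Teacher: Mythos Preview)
The paper does not actually prove this theorem: it is stated in the appendix ``for the sake of completeness'' as a well-known result in probability theory, with references to \citet{Brown} and \citet{Gerhard} for its use in neuroscience, but no argument is given. There is therefore nothing to compare your proposal against.

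That said, your outline is the standard proof (essentially the one in \citet{Brown} and, more abstractly, in \citet{Daley}): derive the conditional survival identity from the definition of $\lambda^*$, change variables to see that each $\tilde T_i$ is $\mathrm{Exp}(1)$ conditionally on the past, deduce independence by induction, and conclude via the inter-arrival characterization of the Poisson process. The only point I would tighten is the step from ``$\p(\tilde T_i>x\mid\mathcal{H}_{i-1})=e^{-x}$ for every $i$'' to joint independence: what you need is that the \emph{joint} conditional law of $(\tilde T_1,\dots,\tilde T_n)$ factorizes, which follows cleanly by writing the joint density via the chain rule of conditioning and applying your conditional survival identity at each step. Your remark about needing $\Lambda^*$ strictly increasing is also slightly stronger than necessary---continuity and $\Lambda^*(t)\to\infty$ suffice for the time change to be well defined on the range of the process---but under the paper's standing assumptions this is harmless.
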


	\section{Bivariate copulas}
	
		\label{cop}
		Let us consider two uniform random variables $U$ and $V$ on $[0,1]$. Assume that they are not necessary independent. They are related
		by their joint distribution function
		\begin{equation}
			C(u,v) = \p(U\leq u,V\leq v).
		\end{equation}
		The function $C \colon [0,1]\times[0,1]\rightarrow[0,1]$ is called bivariate copula.
		
		Consider now the marginal cumulative distribution functions $F_X(x)=\p(X\leq x)$ and $F_Y(y)=\p(Y\leq y)$ of two random variables
		$X$ and $Y$. It is easy to check that 
		\begin{equation}
			\label{copula}
			C(F_X(x),F_Y(y))
		\end{equation}
		defines a bivariate distribution with marginals $F_X(x)$ and $F_Y(y)$.
		
		A celebrated theorem by \citet{sklar} establishes that any bivariate distribution can be written as in (\ref{copula}).
		Furthermore, if the marginals are continuous then the copula representation is unique \citep{Nelsen}.
		
		Copulas contain all the information related to the dependence between the random variables and do not involve marginal distributions.
		Hence, by means of copulas we can separate the study of bivariate distributions into two parts: the marginal behavior
		and the dependency structure between the random variables.
		The simplest bivariate copula is the independent copula. It is defined as 
		\begin{equation}
			\label{indcop}
			C(u,v) = uv, \qquad (u,v)\in[0,1]^2,
		\end{equation}
		and it represents the joint distribution of two independent uniform random variables on $[0,1]$.

	\section{Complete hypotheses and proof of Theorem \ref{hazarda}}
	
		\label{baba}
		Here we write
		\begin{align*}
			f_i(\tau,t|T_j,j=1,\dots,i-1)=
			\frac{d^2}{d\tau \,dt}\p(T_{i+1}\leq\tau,T_i\leq t|T_j,j=1,\dots,i-1)
		\end{align*}
		for the joint conditional density function of the couple $(T_i,T_{i+1})$ given the joint past history
		of an ISI and its subsequent until the couple $(T_{i-1},T_{i})$.
		The convergence properties of the estimator (\ref{hazardc}) depend on the following hypotheses. 
		\begin{enumerate}
			\item[h1.] The joint densities $f(\tau,t)$ and $f_i(\tau,t|T_j,j=1,\dots,i-1)$ belong to the space $C_0(\R^2)$ of real-valued continuous
				functions on $\R^2$ approaching zero at infinity.
			\item[h2.] The marginal densities $f(t)$ and $f_i(t|T_j,j=1,\dots,i-1)$ belong to the space $C_0(\R)$ of real-valued continuous
				functions on $\R$ approaching zero at infinity.
			\item[h3.] The conditional density functions $f_i(\tau,t|T_j,j=1,\dots,i-1)$ and $f_i(t|T_j,j=1,\dots,i-1)$ 
				are Lipschitz with ratio 1,
				\begin{align*}
					&\left| f_i(\tau,t|T_j,j=1,\dots,i-1)-f_i(\tau',t'|T_j,j=1,\dots,i-1)\right| 
					\leq |\tau-\tau'| + |t-t'|\mbox{ ,}\\
					&\left| f_i(t|T_j,j=1,\dots,i-1)-f_i(t'|T_j,j=1,\dots,i-1)\right| 
					\leq |t-t'|.
				\end{align*}
			\item[h4.] Let $[0,M]\subseteq\R_+$ be a compact interval. We assume that for all $t$ in an $\epsilon$-neighborhood
				$[0,M]_\epsilon$ of $[0,M]$ there exists $\gamma_{\epsilon} > 0$ such that $f(t) > \gamma_{\epsilon}$.
			\item[h5.] The kernels $K_j$, $j= 1, 2$, are H\"older with ratio $\mathfrak{L} < \infty$ and order $\gamma\in[0,1]$,
				\begin{align*}
					& \left|K_1(\tau) - K_1(\tau')\right|\leq \mathfrak{L} |\tau - \tau'|^{\gamma},
					\qquad (\tau, \tau') \in \R^2, \\
					& \left| K_2(t) - K_2(t')\right|\leq \mathfrak{L} |t - t'|^{\gamma},
					\qquad (t, t') \in \R^2.
				\end{align*}
		\end{enumerate}  
		
		\begin{nb}
			These assumptions are satisfied by any ergodic process with sufficiently smooth probability density functions (see \citet{Delacroix2}).
		\end{nb}
		
		To prove Theorem \ref{hazarda} we first recall
		that the estimator \eqref{kernel}
		is a uniform strong consistent estimator for $f(t)$ on any compact interval $[0,J]\subseteq\R_+$ \citep{parzen,rosenblatt}
		(see also \citet{Delacroix}), and then we need
		the following auxiliary lemma.
		\begin{lemma}
			\label{survival}
			Under the hypotheses and notations of Theorem \ref{hazarda},
			\begin{align}
				& \hat{S}_n(t)=1-\int_0^t\hat{f}_n(s)\,ds, \\
				& \hat{S}_n(t|\tau)=1-\int_0^t\hat{f}_n(s|\tau)\,ds,
			\end{align}
			are uniform strong consistent estimator on $[0,M]$ of the survival functions $S(t)=1-\int_0^t f(s)\,ds$
			and $S(t|\tau)=1-\int_0^t f(s|\tau)\,ds$, i.e.\
			\begin{align}
				& \lim_{n\rightarrow\infty}\sup_{t\in[0,M]}\left|S(t)-\hat{S}_n(t)\right|=0,\qquad \text{a.s.}\\
				& \label{arar} \lim_{n\rightarrow\infty}\sup_{t\in[0,M]}\left|S(t|\tau)-\hat{S}_n(t|\tau)\right|=0,\qquad \text{a.s.}
			\end{align}
		\end{lemma}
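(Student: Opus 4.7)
The plan is to reduce the statement about survival functions to the uniform strong consistency of the density estimators, which is already available to us: Parzen--Rosenblatt-type results give uniform strong consistency of $\hat f_n(t)$ for $f(t)$ on compacts, and the Arfi--Ould-Sa\"id results (cited where the estimator \eqref{cond-estimator} is introduced) give the analogous property for $\hat f_n(t\mid\tau)$. Once these are in hand, Lemma~\ref{survival} is essentially a straight application of the fact that integration is a bounded operation on a compact interval.

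Concretely, I would begin with the unconditional claim. Writing
\begin{equation*}
S(t)-\hat S_n(t)=\int_0^t\bigl(f(s)-\hat f_n(s)\bigr)\,ds,
\end{equation*}
the elementary bound
\begin{equation*}
\bigl|S(t)-\hat S_n(t)\bigr|\le \int_0^t\bigl|f(s)-\hat f_n(s)\bigr|\,ds\le M\sup_{s\in[0,M]}\bigl|f(s)-\hat f_n(s)\bigr|
\end{equation*}
holds for every $t\in[0,M]$. Taking the supremum over $t\in[0,M]$ on the left-hand side and then letting $n\to\infty$, the right-hand side tends to zero almost surely by the uniform strong consistency of $\hat f_n$ on the compact $[0,M]$ (which sits inside the compact $[0,J]$ on which the Parzen--Rosenblatt result applies, given the hypotheses h1--h5 in Appendix~\ref{baba}). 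This proves the first assertion.

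The conditional statement \eqref{arar} proceeds identically. For fixed $\tau$ and any $t\in[0,M]$,
\begin{equation*}
\bigl|S(t\mid\tau)-\hat S_n(t\mid\tau)\bigr|\le \int_0^t\bigl|f(s\mid\tau)-\hat f_n(s\mid\tau)\bigr|\,ds\le M\sup_{s\in[0,M]}\bigl|f(s\mid\tau)-\hat f_n(s\mid\tau)\bigr|,
\end{equation*}
and the supremum on the right goes to zero almost surely by the uniform strong consistency of $\hat f_n(\,\cdot\mid\tau)$ from the Arfi--Ould-Sa\"id framework, under the smoothness and bandwidth assumptions h1--h5 together with \eqref{1}. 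Taking $\sup_{t\in[0,M]}$ on the left-hand side yields \eqref{arar}.

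Since this reduction is purely analytic, I do not anticipate a serious obstacle; the only delicate point is bookkeeping on the compact of uniform consistency. Specifically one must ensure that $[0,M]$ (the set on which uniformity of $\hat S_n$ is asserted) is contained in the set on which the density estimators are known to converge uniformly, and that hypothesis h4 (positivity of $f$ on a neighborhood of $[0,M]$) is invoked implicitly through the Arfi--Ould-Sa\"id result so that $\hat f_n(\tau)$ in the denominator of \eqref{cond-estimator} does not spoil the estimate. These are checked once, and the rest of the argument is the two displayed chains of inequalities above.
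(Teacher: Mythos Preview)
Your proposal is correct and follows essentially the same route as the paper: both reduce the survival-function estimate to the bound $\sup_{t\in[0,M]}|S(t)-\hat S_n(t)|\le M\sup_{s\in[0,M]}|f(s)-\hat f_n(s)|$ and then invoke the uniform strong consistency of the density estimators (Parzen--Rosenblatt for the unconditional case, Arfi--Ould-Sa\"id for the conditional one). Your additional remarks on hypothesis~h4 and the compact $[0,M]$ are sound bookkeeping but not a departure from the paper's argument.
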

		
		\begin{proof}
			We report only the proof for $\hat{S}_n(t)$, as the proof for $\hat{S}_n(t|\tau)$ is analogous
			(see \citet{Ould-Said} and \citet{Arfi} for the necessary results on $\hat{f}_n(t|\tau)$).
			\begin{align*}
				\sup_{t\in[0,M]}\left|S(t)-\hat{S}_n(t)\right|
				& =   \sup_{t\in[0,M]}\left|\int_0^t\hat{f}_n(s)\,ds-\int_0^t f(s)\,ds\right|
				      \nonumber\\
				&\leq \sup_{t\in[0,M]}\int_0^t\left|\hat{f}_n(s)-f(s)\right|ds \nonumber\\
				&\leq \sup_{t\in[0,M]}\int_0^t\sup_{s'\in[0,M]}\left|\hat{f}_n(s')-f(s')\right|ds
				 	  \nonumber\\
				& =   \sup_{s'\in[0,M]}\left|\hat{f}_n(s')-f(s')\right|\sup_{t\in[0,M]}\int_0^t ds \nonumber\\
				& =  T\sup_{s'\in[0,M]}\left|\hat{f}_n(s')-f(s')\right| .
			\end{align*}
			Finally, by applying the convergence properties of the estimator (\ref{kernel}) (or of the estimator \eqref{cond-estimator}
			if we are concerned with $\hat{S}_n(t|\tau)$), we obtain the thesis. 
		\end{proof}
		
		\begin{nb}
			Observe that $S(t)$, $S(t|\tau)$, $\hat{S}_n(t)$ and $\hat{S}_n(t|	\tau)$ are bounded and strictly positive functions on $[0,M]$,
			as they are survival functions or sums of survival functions associated to strictly positive kernel functions.
		\end{nb}
		
		Now we have all the tools to prove Theorem \ref{hazarda}. 
		\begin{proof}[Proof of Theorem \ref{hazarda}]
			We report only the proof for the estimator $\hat{h}_n(t)$, as the proof for $\hat{h}_n(\tau|t)$ is analogous
			(see again \citet{Ould-Said} and \citet{Arfi} for the necessary results for the proof of the estimator
			of the conditional hazard rate function).
			Under the hypotheses h1--h5, we get
			\begin{align*}
				\sup_{t\in[0,M]}\left|\hat{h}_n(t)-h(t)\right|
				= {} & \sup_{t\in[0,M]}\left|\frac{\hat{f}_n(t)S(t)-f(t)\hat{S}_n(t)}
				{\hat{S}_n(t)S(t)}\right| \nonumber\\
				= {} & \sup_{t\in[0,M]}
				\left|\frac{\hat{f}_n(t)S(t)-f(t)S(t)+f(t)S(t)-f(t)\hat{S}_n(t)}
				{\hat{S}_n(t)S(t)}\right| \nonumber\\
				\leq {} & \sup_{t\in[0,M]}
				\frac{\left|\hat{f}_n(t)-f(t)\right|}		
				{\left|\hat{S}_n(t)S(t)\right|}\left| S(t) \right|
				+\sup_{t\in[0,M]}	
				\frac{\left| S(t)-\hat{S}_n(t)\right|}		
				{\left|\hat{S}_n(t)S(t)\right|}\left| f(t)\right| \\
				\le {} & \frac{\sup_{t\in[0,M]}\left|\hat{f}_n(t)-f(t)\right|}{\inf_{t\in[0,M]}\left|\hat{S}_n(t)S(t)\right|}
				\sup_{t\in[0,M]}\left| S(t) \right| \\
				& + \frac{\sup_{t\in[0,M]}\left|S(t)-\hat{S}_n(t)\right|}{\inf_{t\in[0,M]}\left|\hat{S}_n(t)S(t)\right|}
				\sup_{t\in[0,M]}\left| f(t) \right|
			\end{align*}
			Applying the convergence properties of the estimator (\ref{kernel}) and Lemma \ref{survival} we get the thesis, as $S(t)$
			and $\hat{S}_n(t)$ are bounded and strictly positive functions and $f(x)\in C_0(\R)$.
		\end{proof}
		
		\section{Details of the \emph{a posteriori} validation algorithm}
		
			\label{lavagna2}			
			Summarizing, an \emph{a posteriori} validation test for the proposed firing rate estimator (\ref{est}) follows this algorithm.
			\begin{algo}(\textit{Validation algorithm})
			\label{Alg}
			\begin{enumerate}
				\item Estimate the firing rate through (\ref{est}).  
				\item Compute the transformed ISIs
					\begin{align*}
						\tilde{T}_i = \int_{l_{i-1}}^{l_i}\hat{\lambda}_n^*(u)\,du, \qquad i=1,2,\ldots,n.
					\end{align*}
				\item Test the hypothesis that $\tilde{T}_i$ are i.i.d.\ exponential random variables of mean 1:
					\begin{enumerate}
						\item[a.] Apply the transformation
						$Z_i=1-e^{-\tilde{T}_i} \mbox{, } i=1,2,\ldots,n$, and test that these transformed random variables are uniform on $[0,1]$.
						\item[b.] Perform a goodness-of-fit test for the independent copula on the couples $(Z_i,Z_{i+1})$, $i=1,2,\ldots,n-1$. 
					\end{enumerate}
			\end{enumerate} 
		\end{algo}
		Note that testing that $Z_i$, $i=1,2,\ldots,n$, are i.i.d.\ uniform random variables on $[0,1]$ is equivalent to
		test that $\tilde{T}_i$,  $i=1,2,\ldots,n$, are i.i.d.\ exponential random variables with mean 1, i.e.\ that
		the transformed process is a unit-rate Poisson process.
	
		\begin{nb}
			In Algorithm \ref{Alg} we perform a goodness-of-fit test for copulas to verify the independence between $Z_i$ and $Z_{i+1}$.
			However any other test of independence can be applied, like a classical chi-squared test.
		\end{nb}

\end{document}